\documentclass[12pt,a4paper]{amsart}
\usepackage{amsthm,amssymb,amsmath}
\usepackage[hidelinks]{hyperref}
\usepackage[left=2cm,right=2cm,top=2cm,bottom=2cm]{geometry}
\title{A complex Hadamard matrix of order 94}
\author{Ferenc Sz\"oll\H{o}si}
\date{March 10, 2026, preprint. This research was supported in part by JSPS KAKENHI Grant Number 24K06829}
\dedicatory{Dedicated to Professor Akihiro Munemasa on the occasion of his retirement}
\address{Interdisciplinary Department of Science and Engineering, Shimane University, 1060 Nishikawatsu-cho, Matsue, 690-8504, Shimane, Japan.}
\email{szollosi@riko.shimane-u.ac.jp}
\newtheorem{lemma}{Lemma}
\newtheorem{theorem}{Theorem}

\theoremstyle{definition}

\theoremstyle{remark}
\newtheorem{example}{Example}

\usepackage{booktabs}
\begin{document}
\begin{abstract}
In this paper we modify a fundamental block construction of Kharaghani and Seberry and show how to use certain circulant $\{-1,1\}$-matrices of odd order $p$ to construct a complex Hadamard matrix of order $2p$. In particular, for $p=47$ we use computer-aided methods to discover the necessary circulant matrices, and consequently give a construction of a complex Hadamard matrix of order $94$ for the first time.
\end{abstract}
\maketitle
\section{Introduction and main results}
A real \emph{Hadamard matrix} $H$ of order $n$ is a square $\{-1,1\}$-matrix satisfying $HH^T=nI_n$, where $I_n$ is the identity matrix of order $n$. One of the fundamental constructions of Hadamard matrices come from Williamson's method, who showed how to combine four symmetric circulant $\{-1,1\}$-matrices of odd order $p$ satisfying a certain condition (see equation \eqref{sup} further below) in order to construct a real Hadamard matrix of order $4p$. For long it was conjectured that these so-called \emph{Williamson matrices} always exist, but subsequently for order $p=35$ (and several others, including $47$) they don't \cite{WILKHAR}. The lack of such matrices lead to various generalizations, such as Williamson-type matrices, \emph{good matrices}, \emph{$G$-matrices}, and \emph{best matrices} \cite{DJBIG}. We refer the interested reader to the monograph \cite{SY}.

Let $\mathbf{i}$ denote the complex imaginary unit. In a seminal paper \cite{KS}, Kharaghani and Seberry showed how to use Williamson matrices of order $p$ to construct \emph{complex Hadamard matrices}, i.e., $\{-1,-\mathbf{i},1,\mathbf{i}\}$-matrices with pairwise complex orthogonal rows, of order $2p$. We modify their construction and show how to use circulant matrices when only two of them are symmetric. Furthermore, we exhibit such matrices for $p=47$ for the first time. This leads to the main result of this paper.
\begin{theorem}\label{T1}
There exists a complex Hadamard matrix of order $94$.
\end{theorem}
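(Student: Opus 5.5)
The plan is to prove Theorem~\ref{T1} with a modified Kharaghani--Seberry block construction, specialized to $p=47$, and then to supply the required circulant matrices by a computer search.

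\textbf{Construction.} Let $A,B,C,D$ be circulant $\{-1,1\}$-matrices of odd order $p$. Assume $A$ and $B$ are symmetric, while $C$ and $D$ need not be. Assume further the Williamson-type condition
\[
AA^T+BB^T+CC^T+DD^T=4pI_p.
\]
Set
\[
X=\tfrac12\bigl((A+B)+\mathbf{i}(A-B)\bigr), \qquad Y=\tfrac12\bigl((C+D)+\mathbf{i}(C-D)\bigr).
\]
Each entry of $X$ and of $Y$ lies in $\{\pm1,\pm\mathbf{i}\}$, since for $a,b\in\{\pm1\}$ one has $\tfrac12(a+b)+\tfrac{\mathbf{i}}2(a-b)\in\{\pm1,\pm\mathbf{i}\}$. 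A direct computation gives
\[
XX^*=\tfrac12(AA^T+BB^T)+\tfrac{\mathbf{i}}2(BA^T-AB^T).
\]
Circulant matrices commute, and $A,B$ are symmetric, so $BA^T=AB^T$ and the imaginary term vanishes. For $Y$ the analogous term is $\tfrac{\mathbf{i}}2(DC^T-CD^T)$, which need not vanish.

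\textbf{Block matrix.} I will take
\[
H=\begin{pmatrix} X & YR\\ -Y^*R\,? & X^*\end{pmatrix},
\]
with $R$ the back-circulant identity; the lower-left block (marked $?$) is still to be fixed. The standard Goethals--Seidel trick uses $R$ to turn products of circulants into the needed transposes. The task is to pick the $R$ placement and the signs so that the off-diagonal block cancels, for example $X(YR)^* - (YR)^{\dagger}X^*$ type terms. This uses $XR=RX^T$ for circulant $X$, where $X^T$ is a plain transpose and not a conjugate transpose. The diagonal blocks must reduce to $XX^*+YY^*=2pI$. Because $C$ and $D$ are not symmetric, the imaginary part $\tfrac{\mathbf{i}}2(DC^T-CD^T)$ of $YY^*$ must be killed. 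Two routes are available: strengthen the hypothesis to $CD^T=DC^T$, or let the $R$-twist map $YY^*$ to $\overline{Y}Y^T$, so that the imaginary parts in the two diagonal blocks cancel. I will check which version works and state it as a lemma: if $A,B$ are symmetric circulants, $C,D$ are circulants with the stated sum condition, and a compatibility condition holds (most likely $CD^T$ symmetric, or automatic via $R$), then $H$ is a complex Hadamard matrix of order $2p$.

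\textbf{Search for $p=47$.} I will search for suitable circulants via periodic autocorrelation. Symmetry of $A$ and $B$ cuts the free first-row entries to $23$ each. Using a multiplier group of $\mathbb{Z}_{47}^*$, for instance the order-$23$ subgroup of squares or a small subgroup, to impose orbit structure reduces the search further. The steps are:
\begin{enumerate}
\item Enumerate the candidate $A,B$ and compute their power spectral densities.
\item Filter by the condition that $|\hat a(\omega)|^2+|\hat b(\omega)|^2\le 4p$ at every frequency $\omega$.
\item Match the residual spectrum with pairs $(C,D)$, found by a meet-in-the-middle search on the PAF values.
\end{enumerate}

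\textbf{Main obstacle.} The computational search is the bottleneck; the algebra is routine. Williamson matrices of order $47$ are known not to exist, so the relaxation to nonsymmetric $C,D$ is exactly what must supply new solutions. I would first try imposing invariance under a small multiplier subgroup, such as the cube-free order-$23$ or an order-$2$ subgroup, on $C$ and $D$ as well, so that the search space becomes tractable. Once matrices are found, the theorem follows by listing their first rows, and a computer check of $HH^*=94I$ confirms it.
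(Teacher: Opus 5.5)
\textbf{There is a genuine gap, and it sits in the part you call routine.} The array is never fixed: the lower-left block is left as ``?''. More importantly, the mechanism you propose for removing the cross term cannot work.

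The diagonal blocks of $HH^\ast$ must each equal $2pI_p$ separately, so nothing can cancel between them. The top-right block $T$ must itself satisfy $TT^\ast=(CC^T+DD^T)/2$. With $Y=\frac12\bigl((C+D)+\mathbf{i}(C-D)\bigr)$, twisting $Y$ as a whole by $R$ does not achieve this:
\begin{itemize}
\item $(YR)(YR)^\ast=YY^\ast$, because $R$ is a real symmetric involution;
\item $(RY)(RY)^\ast=RYY^\ast R=(YY^\ast)^T$, which only flips the sign of $\frac{\mathbf{i}}{2}(DC^T-CD^T)$ without removing it.
\end{itemize}
Your alternative, assuming $CD^T=DC^T$, does give a valid array: everything is then circulant and $\left[\begin{array}{cc}X&Y\\ Y^\ast&-X^\ast\end{array}\right]$ works. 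But it is an extra constraint that the paper never imposes, and you give no evidence that (ssxx) quadruples satisfying it exist for $p=47$.

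\textbf{The missing idea is to apply $R$ to $D$ alone, before combining.} Take $Y:=(C+DR)/2+\mathbf{i}(C-DR)/2$. Using $RM^T=MR$ for circulant $M$,
\[
C(DR)^T=CRD^T=CDR=DCR=DRC^T .
\]
So the real matrices $C$ and $DR$ are amicable, and $YY^\ast=(CC^T+DD^T)/2$ for arbitrary real circulants $C,D$. A similar computation gives $Y^\ast Y=YY^\ast$. You need this separately, because this $Y$ is no longer circulant.

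The symmetry of $A$ and $B$ is used a second time here. $X$ is a symmetric circulant, so $XR=RX^T=RX$. Hence $X$ commutes with $C$ and with $DR$, and therefore with $Y$. Then $\left[\begin{array}{cc}X&Y\\ Y^\ast&-X^\ast\end{array}\right]$ is a complex Hadamard matrix of order $2p$. This is the paper's Theorem~\ref{T4} via Lemma~\ref{L1}, and Theorem~\ref{T1} follows from the explicit order-$47$ quadruple of Example~\ref{ex1}.

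\textbf{On the search.} Your outline is close in spirit to the paper's: a hash table of $\mathcal{I}(a)+\mathcal{I}(b)$ over symmetric $a,b$, with random sampling of $c,d$. However, drop the order-$2$ multiplier idea. Invariance of $C,D$ under $\{\pm1\}\le\mathbb{Z}_{47}^\ast$ just makes them symmetric, so the quadruple would consist of Williamson matrices. You yourself note that these do not exist for $p=47$.
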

Prior to this discovery, \DJ okovi\'c \cite{DJgood} constructed good matrices for $p=35$ in $1993$, and consequently a complex Hadamard matrix of order $70$. The next outstanding order seems to be $p=59$, i.e., a complex Hadamard matrix of order $118$, see \cite[Table~11.2]{SY}. We remark that all complex Hadamard matrices are known up to order $18$, see \cite{OS}. For various other generalizations, see e.g., \cite{MUB}, \cite{MW}, and \cite{SCH}.

In Section~\ref{sect2} we outline how to construct from certain circulant $\{-1,1\}$-matrices real and complex Hadamard matrices. We show a modification of a result of Kharaghani and Seberry \cite{KS} in Theorem~\ref{T4}. Then, in Section~\ref{sect3} we describe a computer-aided search for the required circulant matrices, and provide two examples. The combination of Example~\ref{ex1} with Theorem~\ref{T4} proves Theorem~\ref{T1}.
\section{The Goethels--Seidel array and complex Hadamard matrices}\label{sect2}
Throughout this paper we denote by $R$ the back-diagonal matrix of appropriate order. It is easy to see that for any circulant matrix $X$, we have $(XR)=(XR)^T$, hence $RX^T=XR$. A matrix $X$ is \emph{skew}, if $X^T=-X+2I$. The following construction of real Hadamard matrices while similar to Williamson's method, it does not require symmetric blocks. It is called the Goethels--Seidel construction \cite{GSA}.
\begin{theorem}[\cite{GSA}]\label{T2}
Let $A,B,C,D$ be circulant $\{-1,1\}$-matrices of order $p$, such that
\begin{equation}\label{sup}
AA^T+BB^T+CC^T+DD^T=4pI_p.
\end{equation}
Then
\[H=\left[\begin{array}{cccc}
A & BR & CR & DR\\
-BR & A & -D^TR & C^TR\\
-CR & D^TR & A & -B^TR\\
-DR & -C^TR & B^TR & A\end{array}\right]\]
is a real Hadamard matrix of order $4p$.
\end{theorem}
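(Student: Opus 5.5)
The approach is a direct block computation of $HH^T$. The entries of $H$ are clearly $\pm1$, since $R$ only permutes columns, so it suffices to show that $HH^T=4pI_{4p}$. I would compute the sixteen $p\times p$ blocks of $HH^T$, namely the four diagonal blocks and the off-diagonal blocks (by symmetry of $HH^T$, only those above the diagonal). The tools are the following elementary facts.
\begin{enumerate}
\item Circulant matrices commute with each other, and so do their transposes, since $X^T$ is circulant whenever $X$ is.
\item $R=R^T=R^{-1}$.
\item $XR=(XR)^T=RX^T$ for every circulant $X$, as noted before Theorem~\ref{T2}.
\end{enumerate}

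First, the diagonal blocks. Fact (2) gives $(XR)(XR)^T=XRR^TX^T=XX^T$, and similarly $(X^TR)(X^TR)^T=X^TX=XX^T$, the last equality because $X$ and $X^T$ commute. Hence each diagonal block equals $AA^T+BB^T+CC^T+DD^T$ with all signs positive, and this is $4pI_p$ by \eqref{sup}.

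Next, the off-diagonal blocks. Each is a sum of four terms, and the terms should cancel in pairs. There are two kinds of terms.
\begin{itemize}
\item Terms involving $A$ take the form $A(YR)^T=ARY^T$ or $(YR)A^T$. Using fact (3) in the form $YR=RY^T$, I would rewrite $(YR)A^T=RY^TA^T$. Also $ARY^T=(AR)Y^T=RA^TY^T$, and since $A^T$ and $Y^T$ commute this equals $RY^TA^T$. So such a pair, for example $A\cdot(-BR)^T+BR\cdot A^T$ in block $(1,2)$, cancels.
\item Terms not involving $A$ are products $(XR)(YR)^T=XY^T$, or $XR\,R\,Y=XY$ when a transpose is placed on the $R$-side. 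They cancel by commutativity of circulants. For example, in block $(1,2)$ we get $CR(-D^TR)^T+DR(C^TR)^T=-CD+DC=0$.
\end{itemize}
I would then check each of the remaining blocks, $(1,3),(1,4),(2,3),(2,4),(3,4)$, the same way.

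The main obstacle is bookkeeping rather than anything conceptual. The sign pattern of the array has to make every pair cancel, and the transposes on $D^TR$ and similar entries must be handled correctly through fact (3). I would organize the check by listing each block as a sum of four products, reducing each product to a normal form $RZ$ or $Z$ with $Z$ a product of circulants, and verifying that the signs oppose.
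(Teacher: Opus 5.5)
Your proposal is correct, and it is the standard argument; the paper gives no proof of this theorem and simply quotes it from Goethals and Seidel, so the comparison is with that standard direct computation of $HH^T$. The blocks you deferred do cancel as you predict:
\begin{itemize}
\item The $A$-terms vanish in every off-diagonal block because this array satisfies $H_{ji}=-H_{ij}$ for all $i\neq j$. Your pairing argument depends on this, so it is worth stating explicitly.
\item The remaining pairs in blocks $(1,3)$, $(1,4)$, $(2,3)$, $(2,4)$, $(3,4)$ reduce to $BD-DB$, $CB-BC$, $BC^T-C^TB$, $BD^T-D^TB$ and $CD^T-D^TC$, respectively. Each is zero because circulants commute.
\end{itemize}
Commutativity alone would not suffice if the transposes were placed differently. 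For example, $BC^T$ paired with $-CB^T$ need not vanish. What makes every pair consist of the same two factors in opposite order is the particular placement of the transposes in this array, which is exactly the bookkeeping you flagged.
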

We note that equation \eqref{sup} is invariant up to cyclically shifting, reversing, or multiplying the rows of the matrices by $-1$.

It is well-known that from complex Hadamard matrices of order $p$, a real Hadamard matrix of order $2p$ can be constructed \cite{Turyn}. So it is natural to ask to what extent this can be reversed? In particular, can the blocks of order $p$ within the Goethels--Seidel array be combined in a clever way in order to get a complex Hadamard matrix of order $2p$? An observation by Kharaghani and Seberry \cite{KS} gives a partial answer, see Theorem~\ref{T3} below.

Recall that the square complex matrices $A$ and $B$ of the same order are \emph{amicable} \cite[p.~444]{SY}, if $AB^\ast = BA^\ast$, where $\ast$ denotes the conjugate-transpose. For later convenience, we record the following.
\begin{lemma}\label{L2}
Let $A$ and $B$ be complex circulant matrices of the same order.
\begin{itemize}
\item[(i)] If $A$ and $B$ are self-adjoint, then $A$ and $B$ are amicable.
\item[(ii)] If $A$ and $B$ are real, then $A$ and $BR$ are amicable.
\end{itemize}
\end{lemma}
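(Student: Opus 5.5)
The plan is to use the fact that complex circulant matrices of a fixed order $p$ commute, together with the identity $RX^T=XR$ (equivalently $XR=(XR)^T$) for circulant $X$, which is recorded just before the statement. Throughout, $R$ is the real symmetric permutation matrix with $R=R^T=R^\ast$ and $R^2=I$.

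For (i), assume $A=A^\ast$ and $B=B^\ast$. Then
\[
AB^\ast=AB=BA=BA^\ast,
\]
where the middle equality holds because circulant matrices of the same order commute. This is exactly the amicability condition, so (i) needs nothing more.

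For (ii), assume $A$ and $B$ are real, so that $A^\ast=A^T$ and $B^\ast=B^T$. We want $A(BR)^\ast=(BR)A^\ast$. On the left, $(BR)^\ast=R^TB^T=RB^T$. The identity $RX^T=XR$ with $X=B$ gives $RB^T=BR$, so the left side is $ABR$. On the right we have $BRA^T$. The same identity with $X=A$ gives $RA^T=AR$, so the right side is $BAR$. Since $A$ and $B$ commute, $ABR=BAR$, and the two sides agree, which proves (ii).

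The only point that needs care is the justification of $RX^T=XR$. It is stated in the text just before the statement, but a one-line check is to index entries modulo $p$. A circulant $X$ has entries $X_{ij}=x_{j-i}$, and $R$ sends index $k$ to $-1-k$ (up to the indexing convention). Then $(XR)_{ij}=x_{-1-j-i}$, which is symmetric in $i$ and $j$. Hence $XR=(XR)^T=RX^T$.

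Neither step presents a real obstacle; the argument is a direct verification using commutativity of circulant matrices and this identity.
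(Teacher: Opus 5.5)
Your proof is correct and is the natural verification. The paper records this lemma without proof; your argument uses exactly the two facts the paper relies on: circulant matrices of the same order commute, and $RX^T=XR$ for circulant $X$. The latter identity is stated at the start of Section~2 and used again in the proof of Theorem~\ref{T4}, and your index check $(XR)_{ij}=x_{-1-i-j}$ is a valid justification of it.
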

\begin{lemma}\label{L3}
Let $A$ and $B$ be complex amicable matrices. Let $X:=(A+B)/2+\mathbf{i}(A-B)/2$. Then $XX^\ast = (AA^\ast+BB^\ast)/2$. Furthermore, if both $A$ and $B$ are normal, and $A^\ast$ and $B^\ast$ are amicable, then $X$ is normal.
\end{lemma}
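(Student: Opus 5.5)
The plan is to prove Lemma~\ref{L3} by direct expansion, writing $X=\alpha A+\beta B$ with $\alpha=(1+\mathbf{i})/2$ and $\beta=(1-\mathbf{i})/2$. These scalars satisfy
\[
|\alpha|^2=|\beta|^2=\tfrac12,\qquad \alpha\bar\beta=\tfrac{(1+\mathbf{i})^2}{4}=\tfrac{\mathbf{i}}{2},\qquad \beta\bar\alpha=-\tfrac{\mathbf{i}}{2}.
\]

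For the first claim, expand
\[
XX^\ast=|\alpha|^2AA^\ast+|\beta|^2BB^\ast+\alpha\bar\beta\,AB^\ast+\beta\bar\alpha\,BA^\ast
=\tfrac12(AA^\ast+BB^\ast)+\tfrac{\mathbf{i}}{2}(AB^\ast-BA^\ast).
\]
Amicability says exactly $AB^\ast=BA^\ast$, so the cross term vanishes and $XX^\ast=(AA^\ast+BB^\ast)/2$.

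For normality, compute $X^\ast X$ the same way, with $X^\ast=\bar\alpha A^\ast+\bar\beta B^\ast$:
\[
X^\ast X=|\alpha|^2A^\ast A+|\beta|^2B^\ast B+\bar\alpha\beta\,A^\ast B+\bar\beta\alpha\,B^\ast A
=\tfrac12(A^\ast A+B^\ast B)-\tfrac{\mathbf{i}}{2}(A^\ast B-B^\ast A).
\]
The hypothesis that $A^\ast$ and $B^\ast$ are amicable means $A^\ast(B^\ast)^\ast=B^\ast(A^\ast)^\ast$, that is, $A^\ast B=B^\ast A$. So this cross term also vanishes, leaving $X^\ast X=(A^\ast A+B^\ast B)/2$. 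Since $A$ and $B$ are normal, $A^\ast A=AA^\ast$ and $B^\ast B=BB^\ast$, and therefore $X^\ast X=XX^\ast$.

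The argument has no genuine obstacle. The only care needed is in getting the scalar products $\alpha\bar\beta$ and $\bar\alpha\beta$ right, and in unwinding the second amicability hypothesis to $A^\ast B=B^\ast A$ rather than its adjoint form.
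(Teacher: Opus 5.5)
Your proposal is correct and follows the paper's proof: you expand $XX^\ast$ and $X^\ast X$ into the same two expressions, whose cross terms $\tfrac{\mathbf{i}}{2}(AB^\ast-BA^\ast)$ and $-\tfrac{\mathbf{i}}{2}(A^\ast B-B^\ast A)$ vanish by the two amicability hypotheses, and normality of $A$ and $B$ then gives $X^\ast X=XX^\ast$. Writing $X=\alpha A+\beta B$ only makes explicit the scalar bookkeeping that the paper leaves to the reader.
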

\begin{proof}
We have $XX^\ast=(AA^\ast+BB^\ast)/2+\mathbf{i}(AB^\ast-BA^\ast)/2$, and $X^\ast X=(A^\ast A+B^\ast B)/2-\mathbf{i}(A^\ast B-B^\ast A)/2$. These two expressions agree subject to the noted conditions.
\end{proof}
We note that circulant matrices as well as self-adjoint matrices are normal.
\begin{theorem}[\cite{DJgood}, \cite{KS}]\label{T3}
Let $A$, $B$, $C$ and $D$ be pairwise amicable $\{-1,1\}$-matrices of order $p$, subject to \eqref{sup}. Let $X:=(A+B)/2$, $Y:=(A-B)/2$, $V:=(C+D)/2$, and $W:=(C-D)/2$. Then
\[K=\left[\begin{array}{cc}
X+\mathbf{i}Y & V+\mathbf{i}W\\
-V+\mathbf{i}W & X-\mathbf{i}Y\end{array}\right]\]
is a complex Hadamard matrix of order $2p$.
\end{theorem}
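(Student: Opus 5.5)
We need to show two things: every entry of $K$ lies in $\{-1,-\mathbf{i},1,\mathbf{i}\}$, and $KK^\ast=2pI_{2p}$. The entries are easy. Since $A,B$ are $\{-1,1\}$-matrices, at each position exactly one of $(A\pm B)/2$ is $\pm1$ and the other is $0$. Hence each entry of $X+\mathbf{i}Y$ and $X-\mathbf{i}Y$ lies in $\{\pm1,\pm\mathbf{i}\}$. The same holds for $V\pm\mathbf{i}W$ and $-V+\mathbf{i}W$.

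For orthogonality, write $P:=X+\mathbf{i}Y$ and $Q:=V+\mathbf{i}W$. Since $X,Y,V,W$ are real, $X-\mathbf{i}Y=\overline{P}$ and $-V+\mathbf{i}W=-\overline{Q}$, so
\[K=\left[\begin{array}{cc} P & Q\\ -\overline{Q} & \overline{P}\end{array}\right].\]
Then
\[KK^\ast=\left[\begin{array}{cc} PP^\ast+QQ^\ast & -PQ^T+QP^T\\ \ast & \overline{QQ^\ast}+\overline{PP^\ast}\end{array}\right],\]
using $\overline{P}\,\overline{P}^{\,\ast}=\overline{PP^\ast}$. Note that $P$ is exactly the matrix of Lemma~\ref{L3} built from $A,B$, and $Q$ is the one built from $C,D$. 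Since $A,B$ are amicable, Lemma~\ref{L3} gives $PP^\ast=(AA^T+BB^T)/2$, and likewise $QQ^\ast=(CC^T+DD^T)/2$. By \eqref{sup} the diagonal blocks are therefore $2pI_p$, which is real, so the conjugate block is also $2pI_p$.

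The main obstacle is the off-diagonal block, where we need $PQ^T=QP^T$. Expanding,
\[PQ^T=XV^T-YW^T+\mathbf{i}(YV^T+XW^T),\]
and $QP^T$ is the same expression with the roles of $(X,Y)$ and $(V,W)$ swapped. Equality of real parts needs $XV^T-YW^T=VX^T-WY^T$, and equality of imaginary parts needs $YV^T+XW^T=WX^T+VY^T$. Substituting the definitions turns these into combinations of the products $AC^T, AD^T, BC^T, BD^T$ and their transposes. Pairwise amicability of the real matrices means $MN^T=NM^T$ for every pair among $A,B,C,D$. Each cross term $XV^T$ etc.\ is one quarter of a signed sum of such products. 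Using $MN^T=NM^T$, every product is symmetric under the swap, so both identities hold termwise.

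Strictly only the amicability of the pairs $(A,C),(A,D),(B,C),(B,D)$ is used here, and $(A,B),(C,D)$ are used in Lemma~\ref{L3}. This gives $KK^\ast=2pI_{2p}$.
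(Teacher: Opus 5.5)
The paper gives no proof of this statement (it is quoted from \cite{KS} and \cite{DJgood}), so there is no in-paper argument to match; your direct verification is correct and complete.

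Writing $K=\left[\begin{array}{cc}P&Q\\-\overline{Q}&\overline{P}\end{array}\right]$ makes the diagonal blocks of $KK^\ast$ equal to $PP^\ast+QQ^\ast$ and its complex conjugate. Lemma~\ref{L3} handles these using only the amicability of $(A,B)$ and of $(C,D)$. The off-diagonal block is $QP^T-PQ^T$. Your termwise check there is right: $XV^T=VX^T$, $YW^T=WY^T$, $XW^T=WX^T$ and $YV^T=VY^T$ each hold individually. It can be shortened by noting that $MN^T=NM^T$ is bilinear over $\mathbb{C}$, with no conjugation. So amicability of each of $A,B$ with each of $C,D$ passes directly to $PQ^T=QP^T$.

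The contrast with the paper's own Theorem~\ref{T4} is worth noting. There the Lemma~\ref{L1} shape $\left[\begin{array}{cc}X&Y\\Y^\ast&-X^\ast\end{array}\right]$ makes the off-diagonal block $XY-YX$, which vanishes by commutativity rather than cross-amicability. Its second diagonal block is $X^\ast X+Y^\ast Y$ rather than a conjugate of the first, which is why the paper must establish normality of $Y$ via the second half of Lemma~\ref{L3}. Your conjugate-block shape needs no normality, but it needs all four cross pairs amicable. For a symmetric circulant $A$ and a non-symmetric circulant $C$, the condition $AC^T=CA^T$ reduces to $AC^T=AC$, which generally fails. This is why Theorem~\ref{T3} cannot be applied to the (ssxx) matrices of order $47$, and why the paper changes the array.
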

In order to construct a complex Hadamard matrix of order $94$, it is natural to try to find symmetric circulant matrices of order $47$ subject to \eqref{sup}, and combine them with Theorem~\ref{T3}. Unfortunately, such matrices do not exist \cite{WILKHAR}. Therefore, we modify the array $K$ of Theorem~\ref{T3} in order to accommodate a different class of matrices. The main idea is to somehow incorporate the matrix $R$ in $K$, similarly as shown in Theorem~\ref{T2}. First we need the following preliminary result.
\begin{lemma}\label{L1}
Let $X$ and $Y$ be complex matrices of order $p$, satisfying:
\begin{enumerate}
\item[(i)] $x_{ij},y_{ij}\in\{-1,-\mathbf{i},1,\mathbf{i}\}$ for every $i,j\in\{1,\dots,p\}$,
\item[(ii)] $XY=YX$,
\item[(iii)] $XX^\ast+YY^\ast =2pI_p$,
\item[(iv)] $X^\ast X+Y^\ast Y =2pI_p$.
\end{enumerate}
Then the matrix
\[Z=\left[\begin{array}{cc}
X & Y\\
Y^\ast & -X^\ast\end{array}\right]
\]
is a complex Hadamard matrix of order $2p$.
\end{lemma}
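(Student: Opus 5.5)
The plan is a direct block computation of $ZZ^\ast$. Condition (i) guarantees that every entry of $Z$ lies in $\{-1,-\mathbf{i},1,\mathbf{i}\}$, because conjugation and negation preserve this set. So it only remains to show that $ZZ^\ast=2pI_{2p}$.

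First, compute the conjugate transpose blockwise:
\[
Z^\ast=\left[\begin{array}{cc} X^\ast & Y\\ Y^\ast & -X\end{array}\right],
\]
since $(Y^\ast)^\ast=Y$ and $(-X^\ast)^\ast=-X$. Multiplying out then gives
\[
ZZ^\ast=\left[\begin{array}{cc} XX^\ast+YY^\ast & XY-YX\\ Y^\ast X^\ast-X^\ast Y^\ast & Y^\ast Y+X^\ast X\end{array}\right].
\]
The diagonal blocks equal $2pI_p$ by (iii) and (iv) respectively. The upper off-diagonal block vanishes by (ii). The lower off-diagonal block is $(XY)^\ast-(YX)^\ast=(XY-YX)^\ast$, which vanishes by (ii) as well.

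Hence $ZZ^\ast=2pI_{2p}$, so the rows of $Z$ are pairwise orthogonal and each has squared norm $2p$. Together with the entry condition, $Z$ is a complex Hadamard matrix of order $2p$.

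There is no real obstacle here. The only step needing care is the bookkeeping of the conjugate transpose of the block matrix, in particular that the lower-right entry of $Z^\ast$ is $-X$ and not $-X^\ast$. This is what makes the cross terms reduce to the commutator $XY-YX$ rather than to something requiring amicability.
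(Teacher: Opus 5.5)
Your proposal is correct and is essentially the paper's argument: the paper likewise uses (i) for the entries, (ii) for orthogonality between the two row blocks, and (iii), (iv) for orthogonality within the first and second row blocks respectively. You simply write out the block product $ZZ^\ast$ explicitly, which the paper leaves implicit.
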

We remark that if $X$ and $Y$ are normal matrices in Lemma~\ref{L1}, then conditions (iii) and (iv) are equivalent.
\begin{proof}
By (i) the elements of $Z$ belong to $\{-1,-\mathbf{i},1,\mathbf{i}\}$. By (ii), the first block of rows and the second block of rows are pairwise complex orthogonal. Finally, by (iii) (resp.~(iv)), the first (resp.~second) block of rows are pairwise complex orthogonal.
\end{proof}
To the best of our knowledge, the following is a new construction.
\begin{theorem}\label{T4}
Let $A$, $B$, $C$ and $D$ be circulant $\{-1,1\}$-matrices of order $p$, subject to \eqref{sup}, such that $A$ and $B$ are symmetric. Let $R$ be the back-diagonal matrix. Then
\[\left[\begin{array}{cc}
(A+B)/2+\mathbf{i}(A-B)/2 & (C+DR)/2+\mathbf{i}(C-DR)/2\\
(C^T+DR)/2-\mathbf{i}(C^T-DR)/2 & -(A+B)/2+\mathbf{i}(A-B)/2
\end{array}\right]\]
is a complex Hadamard matrix of order $2p$.
\end{theorem}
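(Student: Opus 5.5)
The plan is to recognize the displayed array as the matrix $Z$ of Lemma~\ref{L1}, with
\[X:=(A+B)/2+\mathbf{i}(A-B)/2,\qquad Y:=(C+DR)/2+\mathbf{i}(C-DR)/2,\]
and then to check conditions (i)--(iv) of that lemma.

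First I check that the lower blocks really are $Y^\ast$ and $-X^\ast$. Since $A,B$ are real and symmetric, $X^\ast=(A+B)/2-\mathbf{i}(A-B)/2$, so $-X^\ast$ is the lower-right block. Since $D$ is circulant, $DR$ is symmetric by the remark on $R$ at the start of this section. Hence $Y^\ast=(C^T+DR)/2-\mathbf{i}(C^T-DR)/2$, which is the lower-left block.

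Condition (i) is entrywise. For $\pm1$ entries $a,b$, exactly one of $(a+b)/2$ and $(a-b)/2$ is $\pm1$ and the other is $0$. So every entry of $X$ lies in $\{\pm1,\pm\mathbf{i}\}$, and the same holds for $Y$, since $DR$ is again a $\{-1,1\}$-matrix.

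For (iii) I use Lemma~\ref{L3} twice:
\begin{itemize}
\item $A$ and $B$ are real symmetric circulants, hence self-adjoint and amicable by Lemma~\ref{L2}(i). Lemma~\ref{L3} gives $XX^\ast=(AA^T+BB^T)/2$.
\item $C$ and $DR$ are amicable by Lemma~\ref{L2}(ii). Lemma~\ref{L3} gives $YY^\ast=(CC^T+DRR^TD^T)/2=(CC^T+DD^T)/2$, using $RR^T=I$.
\end{itemize}
Adding these and using \eqref{sup} gives $XX^\ast+YY^\ast=2pI_p$.

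For (iv) I will show that $X$ and $Y$ are normal, so that (iv) follows from (iii) by the remark after Lemma~\ref{L1}. The matrix $X$ is a complex circulant, hence normal. For $Y$, I apply the second part of Lemma~\ref{L3} to $C$ and $DR$:
\begin{itemize}
\item Both are normal: $C$ is circulant and $DR$ is real symmetric.
\item It remains to check that $C^T$ and $(DR)^\ast=DR$ are amicable, i.e.\ $C^TDR=DRC$.
\item Using $RC=C^TR$, the right side is $DC^TR$.
\item The left side is $C^TDR=DC^TR$, because circulants commute.
\end{itemize}
So $Y$ is normal.

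The step I expect to need the most care is (ii), $XY=YX$, because $Y$ is not circulant. I split $Y$ into its circulant part $\tfrac{1+\mathbf{i}}{2}C$ and its non-circulant part $\tfrac{1-\mathbf{i}}{2}DR$.
\begin{itemize}
\item $X$ commutes with the circulant $C$.
\item For the other part, $X$ is symmetric because $A,B$ are. Then $DRX=DX^TR=DXR=XDR$, using $RX=X^TR$ for circulant $X$ and the fact that the circulants $X$ and $D$ commute.
\end{itemize}
Hence $XY=YX$. All four conditions of Lemma~\ref{L1} hold, so the array is a complex Hadamard matrix of order $2p$.
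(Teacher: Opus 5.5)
Your proposal is correct and follows the paper's proof essentially step for step: the same $X$ and $Y$ plugged into Lemma~\ref{L1}, normality of $Y$ via the second part of Lemma~\ref{L3} with the identical check $C^TDR=DRC$, and condition (iii) via Lemmas~\ref{L2} and~\ref{L3}. You are somewhat more explicit than the paper in two places: you verify that the lower blocks are $Y^\ast$ and $-X^\ast$, and you spell out why the symmetric circulant $X$ commutes with $DR$.
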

\begin{proof}
We verify the conditions shown in Lemma~\ref{L1}. Let $X:=(A+B)/2+\mathbf{i}(A-B)/2$ and $Y:=(C+DR)/2+\mathbf{i}(C-DR)/2$. Condition (i) is clearly met. The matrix $X$ is symmetric and circulant, hence commutes with $C$, $D$, and $R$. In particular, condition (ii) is met. The matrix $X$ is circulant, hence normal. The matrix $Y$ is also normal by Lemma~\ref{L3}, since $C$ is circulant, $DR$ is real symmetric, and $C^T$ and $(DR)^T$ are amicable. Indeed, $C^T(DR)=DC^TR=DRC=(DR)(C^T)^\ast$. Thus, condition (iii) and (iv) are equivalent. Furthermore, $A$ and $B$ are self-adjoint, circulant, hence amicable by Lemma~\ref{L2} (i). The matrices $C$ and $DR$ are amicable by Lemma~\ref{L2} (ii). Thus, we may use Lemma~\ref{L3} to get $XX^\ast+YY^\ast=(AA^T+BB^T+CC^T+DD^T)/2$, which equals to $2pI_p$ by \eqref{sup}. In particular, condition (iii) is satisfied, and we are done.
\end{proof}
Therefore, in order to construct a complex Hadamard matrix of order $94$, it would be enough to find four, circulant $\{-1,1\}$-matrices of order $47$ satisfying \eqref{sup}, such that at least two of them are symmetric. Since we were unable to locate such matrices in the existing literature, we implemented a computer-aided search algorithm and constructed the required matrices directly.
\section{The search for circulant matrices of order $47$}\label{sect3}
The search outlined here is very similar to those described in detail in \cite{DK}, \cite{WILKHAR}, and \cite{MDK}. We recall the notion of symmetry types \cite{DJBIG} of a quadruple of circulant matrices $(A,B,C,D)$ satisfying \eqref{sup}. If all four matrices are symmetric, then the symmetry type is (ssss). This is the case of Williamson matrices \cite{WILKHAR}. If one of the matrices is skew, but the others are symmetric, then the symmetry type is (ksss). This is the case of good matrices \cite{DJgood}. When two of the matrices are skew and two are symmetric, then we have the symmetry type (kkss), the case of $G$-matrices \cite{SP}. Finally, when three matrices are skew, and only one of them is symmetric, is the case of best matrices with symmetry type (kkks). If a block is neither symmetric, nor skew, then (the lack of) its symmetry type is denoted by x. 

From now on we are focusing on matrices where (at least) two of the block are symmetric. It is known, see \cite[Table~1]{DJBIG} and \cite[Table~1]{DK}, that no Williamson matrices, good matrices, or $G$-matrices of order $p=47$ exist. In \cite[Section~5.4]{DJBIG} an example with (ksxx) symmetry is presented. Further, in \cite{DJsym} examples with symmetry type (sxxx) are shown. However, none of these seems to be compatible with our array in Theorem~\ref{T4}. For these reasons, we initiate a search for matrices with symmetry type (ssxx). As a preliminary, we collect the necessary tools.

\begin{lemma}
Let $A$, $B$, $C$, $D$ be symmetric circulant $\{-1,1\}$-matrices of odd order $p$ with row sums $\sigma_A$, $\sigma_B$, $\sigma_C$ and $\sigma_D$, respectively, such that \eqref{sup} holds. Then
\begin{equation}\label{sos}
\sigma_A^2+\sigma_B^2+\sigma_C^2+\sigma_D^2=4p.
\end{equation}
\end{lemma}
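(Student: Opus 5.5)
The plan is to apply both sides of \eqref{sup} to the all-ones vector $\mathbf{j}$ of length $p$. Symmetry and odd order will play no essential role; the identity follows from the circulant structure alone.

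First I will record that every circulant $\{-1,1\}$-matrix $X$ of order $p$ has constant row sums and constant column sums, both equal to $\sigma_X$, because each row and each column is a permutation of the first row. Equivalently, $X\mathbf{j}=\sigma_X\mathbf{j}$ and $X^T\mathbf{j}=\sigma_X\mathbf{j}$. It follows that $XX^T\mathbf{j}=\sigma_X X\mathbf{j}=\sigma_X^2\mathbf{j}$. (For the symmetric matrices in the statement, $X^T=X$ makes the column-sum observation immediate.)

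Next I will multiply \eqref{sup} on the right by $\mathbf{j}$. The left-hand side becomes $(\sigma_A^2+\sigma_B^2+\sigma_C^2+\sigma_D^2)\mathbf{j}$ and the right-hand side becomes $4p\mathbf{j}$. Comparing any one coordinate gives \eqref{sos}.

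There is no real obstacle here. The only point that needs care is the fact that column sums equal row sums for a circulant matrix, so that $X^T\mathbf{j}=\sigma_X\mathbf{j}$. This holds for every circulant matrix, so the lemma is also true without the symmetry hypothesis, which is relevant for the (ssxx) search.
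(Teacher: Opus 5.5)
Your proof is correct and is essentially the paper's argument. The paper multiplies \eqref{sup} by $J$ on both sides, while you multiply by the all-ones vector on the right; both rely only on a circulant matrix having all row and column sums equal to $\sigma_X$. Your remark that symmetry is not needed is also right. The paper's argument does not use it either, which is what justifies applying \eqref{sos} to the non-symmetric blocks $C$ and $D$ in the (ssxx) search in step (A1).
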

\begin{proof}
Multiply \eqref{sup} with the all $1$ matrix $J$ from the left and right, respectively.
\end{proof}
\begin{lemma}
Let $A$ be a symmetric, circulant matrix of odd order $p$ with first row $a=[a_0,\dots,a_{p-1}]$ which appears as a block of a Hadamard matrix of order $4p$. Then for every $k\in\{1,\dots,(p-1)/2\}$, we have
\begin{equation}\label{bound}
\left|a_0+2\sum_{j=1}^{(p-1)/2}a_j\cos(2\pi k j/p)\right|\leq 2\sqrt{p}.
\end{equation}
\end{lemma}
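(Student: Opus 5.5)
The plan is to diagonalize all circulant blocks at once with the discrete Fourier transform and then bound each eigenvalue by the sum of squares coming from \eqref{sup}. Interpret ``appears as a block of a Hadamard matrix of order $4p$'' as: $A$ is one of four circulant $\{-1,1\}$-matrices $A,B,C,D$ satisfying \eqref{sup}, as in Theorem~\ref{T2}. This hypothesis is the only place where the other three blocks enter.

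Let $\omega=e^{2\pi\mathbf{i}/p}$. For any circulant $X$ with first row $x=[x_0,\dots,x_{p-1}]$, the vector $v_k=(1,\omega^k,\omega^{2k},\dots,\omega^{(p-1)k})^T$ is an eigenvector with eigenvalue $\lambda_k(X)=\sum_j x_j\omega^{jk}$. The same $v_k$ is an eigenvector of $X^T$, with eigenvalue $\overline{\lambda_k(X)}$, since the matrices are real. Applying both sides of \eqref{sup} to $v_k$ therefore gives
\[|\lambda_k(A)|^2+|\lambda_k(B)|^2+|\lambda_k(C)|^2+|\lambda_k(D)|^2=4p\]
for every $k$. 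Each term is nonnegative, so $|\lambda_k(A)|^2\le 4p$, that is, $|\lambda_k(A)|\le 2\sqrt{p}$.

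It remains to identify $\lambda_k(A)$ with the expression in \eqref{bound}. Since $A$ is symmetric circulant, $a_j=a_{p-j}$ for $1\le j\le p-1$. Because $p$ is odd, the indices $1,\dots,p-1$ split into the pairs $\{j,p-j\}$ with $1\le j\le (p-1)/2$, and each pair contributes $a_j(\omega^{jk}+\omega^{-jk})=2a_j\cos(2\pi kj/p)$. Hence
\[\lambda_k(A)=a_0+2\sum_{j=1}^{(p-1)/2}a_j\cos(2\pi kj/p),\]
which is real, and \eqref{bound} follows. This holds for every $k$, and in particular for $k\in\{1,\dots,(p-1)/2\}$.

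There is no real obstacle. The only points needing care are the eigenvalue of $X^T$ on $v_k$ (conjugate, because the entries are real) and the pairing argument, which uses that $p$ is odd so that no index is paired with itself. The other three blocks need no symmetry for this argument.
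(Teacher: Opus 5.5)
Your argument is correct for the reading you adopt, but it takes a different route from the paper and, as written, proves a narrower statement.

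The paper uses a norm bound. If $H$ is a Hadamard matrix of order $4p$, then $H/(2\sqrt{p})$ is orthogonal, so every square block of $H$ has spectral norm at most $2\sqrt{p}$. The quantity inside the absolute value in \eqref{bound} is an eigenvalue of $A$, so it is bounded by that norm. This uses nothing about the other blocks. It therefore covers the lemma as literally stated, where $A$ is merely some $p\times p$ block of some Hadamard matrix of order $4p$, not necessarily one built from circulant $B,C,D$ satisfying \eqref{sup}.

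You instead diagonalize all four circulants at once and obtain the exact identity $|\lambda_k(A)|^2+|\lambda_k(B)|^2+|\lambda_k(C)|^2+|\lambda_k(D)|^2=4p$. This is sharper: it is an equality, the spectral form of \eqref{sup}. You also verify carefully that the expression in \eqref{bound} really is $\lambda_k(A)$, which the paper leaves implicit. The cost is that it requires all four blocks to be circulant and to satisfy \eqref{sup}. That is exactly the setting in which the lemma is applied in step (A3), so nothing is lost for the paper's purposes.

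To recover the general statement, replace your equality by an inequality. Suppose $A$ occupies a set $I$ of rows of $H$, and let $H_I$ be the $p\times 4p$ submatrix on those rows. Then $H_IH_I^T=4pI_p$, so $4pI_p-AA^T$ is positive semidefinite, being the Gram contribution of the remaining columns. Evaluating this form at $v_k$ gives $|\lambda_k(A)|^2\|v_k\|^2=\|A^Tv_k\|^2\le 4p\|v_k\|^2$, which is the paper's norm argument in disguise.
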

\begin{proof}
Indeed, the quantity on the left-hand side is the absolute value of an eigenvalue of $A$, which cannot be larger than the square root of the size of the Hadamard matrix in which the block $A$ is embedded.
\end{proof}
For $k\geq 1$ let $\mathcal{P}_k$ denote the $k$th prime number. Let $\mathcal{I}(a)\in\mathbb{Z}^{(p-1)/2}$ denote the vector containing the \emph{periodic autocorrelation coefficients} of $a=[a_0,\dots,a_{p-1}]$, i.e., coordinate $s\in\{1,2,\dots,(p-1)/2\}$ of $\mathcal{I}(a)$ is 
\[\mathcal{I}_s(a):=\sum_{k=0}^{p-1}a_ka_{k+s},\]
where indices on the right-hand side are taken modulo $p$. We define a hash function
\[h\colon \mathbb{Z}^{(p-1)/2}\to \mathbb{Z},\qquad  h(x):=\sum_{s=1}^{(p-1)/2}x_s\cdot \mathcal{P}_{500s},\]
and denote by $\|x\|:=\sum_{s=1}^{(p-1)/2}x_s^2$ the norm of $x$. Finally, let $\mathcal{B}\in\mathbb{Z}^+$ be an experimentally determined bound which controls the size of the search space, and in turn the required computer memory.

Recall that $R$ is the back-diagonal matrix. The following is essentially the search algorithm we implemented first in Mathematica\footnote{Wolfram Research, Inc., Mathematica, Version 14.3, Champaign, IL (2025).} for $p\leq 39$, and later in C++, where working with data structures supporting fast binary search is more convenient. To further improve efficiency, we precomputed the required prime numbers and the cosines (up to $15$ significant digits), and stored them in a lookup table.
\begin{enumerate}
\item[(A1)] Choose a positive odd integer $p\leq 47$, and a bound $\mathcal{B}\approx 1200$. Choose four odd integers $\sigma_A\equiv \sigma_B\equiv p\ (\mathrm{mod}\ 4)$, $\sigma_C>0$, $\sigma_D>0$, such that equation \eqref{sos} holds.
\item[(A2)] Generate all $\{-1,1\}$-vectors $x$ of length $(p-1)/2$ with row sum $(\sigma_A-1)/2$, and create all vectors $a=[1,x,xR]$, and analogously the vectors $b$.
\item[(A3)] For all those vectors created in (A2), test whether they satisfy each of the bounds \eqref{bound}. If so, then precompute the autocorrelation vectors $\mathcal{I}(a)$ and $\mathcal{I}(b)$.
\item[(A4)] Calculate all possible distinct sum of autocorrelation vectors $\Sigma:=\mathcal{I}(a)+\mathcal{I}(b)$, and if $\|\Sigma\|\leq \mathcal{B}$, then store $\Sigma$ at index $h(\Sigma)$ in a set.
\item[(A5)] Repeatedly choose two random vectors $c$ and $d$ of length $p$ with row sums $\sigma_C$ and $\sigma_D$, respectively, calculate $\Delta:=-\mathcal{I}(c)-\mathcal{I}(d)$, and check whether the set at index $h(\Delta)$ contains the vector $\Delta$. If so, save $c$, $d$, and $\Delta$, and proceed to (A6).
\item[(A6)] Recover some vectors $a$ and $b$, such that $\mathcal{I}(a)+\mathcal{I}(b)=\Delta$, and output $(a,b,c,d)$.
\end{enumerate}
For $p=47$ we found several examples with the required symmetry type (ssxx), but we only describe two of them. The first has $\sigma_A=3$, $\sigma_B=\sigma_C=7$, and $\sigma_D=9$, and was found by setting $\mathcal{B}=1000$. This resulted in $784031$ unique hash values, and a lookup table containing $136744706$ entries, with at most $445$ of them indexed by the same hash value. After roughly $8\cdot 10^9$ trials within a little over a day, we found the following sequences described in Example~\ref{ex1}. The computation was single-threaded, and required about 20GBs of memory.
\begin{example}\label{ex1}
Replacing the $0$s with $-1$s in the matrix
\[\left[\arraycolsep0pt
\begin{array}{ccccccccccccccccccccccccccccccccccccccccccccccc}
 1 & 1 & 1 & 1 & 0 & 1 & 0 & 1 & 1 & 0 & 0 & 0 & 1 & 0 & 1 & 1 & 0 & 0 & 0 & 0 & 0 & 1 & 1 & 1 & 1 & 1 & 1 & 0 & 0 & 0 & 0 & 0 & 1 & 1 & 0 & 1 & 0 & 0 & 0 & 1 & 1 & 0 & 1 & 0 & 1 & 1 & 1 \\
 1 & 1 & 0 & 1 & 0 & 1 & 1 & 1 & 0 & 1 & 0 & 1 & 1 & 0 & 0 & 0 & 1 & 1 & 0 & 0 & 1 & 0 & 1 & 1 & 1 & 1 & 0 & 1 & 0 & 0 & 1 & 1 & 0 & 0 & 0 & 1 & 1 & 0 & 1 & 0 & 1 & 1 & 1 & 0 & 1 & 0 & 1 \\
 1 & 1 & 1 & 1 & 1 & 1 & 1 & 1 & 1 & 0 & 0 & 0 & 1 & 1 & 0 & 1 & 0 & 1 & 1 & 0 & 0 & 1 & 0 & 0 & 0 & 0 & 0 & 1 & 0 & 0 & 1 & 1 & 0 & 1 & 1 & 1 & 1 & 0 & 1 & 1 & 1 & 0 & 0 & 1 & 0 & 1 & 0 \\
 1 & 1 & 1 & 1 & 1 & 1 & 0 & 0 & 1 & 1 & 1 & 1 & 0 & 1 & 1 & 0 & 1 & 0 & 1 & 0 & 0 & 1 & 1 & 0 & 1 & 1 & 0 & 0 & 0 & 1 & 0 & 1 & 1 & 0 & 1 & 1 & 1 & 0 & 0 & 0 & 1 & 1 & 0 & 1 & 1 & 0 & 0 \\
\end{array}
\right],\]
and cyclically shifting its rows $a$, $b$, $c$, and $d$, we get four circulant matrices $A=A^T$, $B=B^T$, $C$ and $D$ of order $47$ satisfying equation \eqref{sup}.
One may calculate that
\begin{align*}
\Sigma&:=\mathcal{I}(a)+\mathcal{I}(b)\\
&=[-2, 6, -10, -6, -6, 6, 2, -2, -10, 6, 6, -6, -6, -2, -6, -6, 2, 2, 2, -2, -2, 2, 14]
\end{align*}
with $\|\Sigma\|=796$. The sum of the autocorrelation coefficients peaks at $14$.
\end{example}
In particular, the proof of Theorem~\ref{T1} follows by combining the circulant matrices in Example~\ref{ex1} with the array shown in Theorem~\ref{T4}.

The second example has $\sigma_A=-1$, $\sigma_B=-5$, and $\sigma_C=\sigma_D=9$, and we found it by setting $\mathcal{B}=1200$.
\begin{example}\label{ex2}
Replacing the $0$s with $-1$s in the matrix
\[\left[\arraycolsep0pt
\begin{array}{ccccccccccccccccccccccccccccccccccccccccccccccc}
 1 & 0 & 0 & 1 & 1 & 0 & 0 & 0 & 0 & 1 & 1 & 1 & 0 & 0 & 0 & 1 & 0 & 0 & 0 & 1 & 1 & 1 & 1 & 1 & 1 & 1 & 1 & 1 & 1 & 0 & 0 & 0 & 1 & 0 & 0 & 0 & 1 & 1 & 1 & 0 & 0 & 0 & 0 & 1 & 1 & 0 & 0 \\
 1 & 0 & 0 & 0 & 1 & 1 & 0 & 0 & 1 & 0 & 1 & 0 & 0 & 0 & 1 & 0 & 1 & 1 & 0 & 0 & 1 & 0 & 1 & 1 & 1 & 1 & 0 & 1 & 0 & 0 & 1 & 1 & 0 & 1 & 0 & 0 & 0 & 1 & 0 & 1 & 0 & 0 & 1 & 1 & 0 & 0 & 0 \\
 1 & 1 & 1 & 1 & 1 & 1 & 1 & 0 & 0 & 0 & 0 & 1 & 0 & 1 & 1 & 1 & 1 & 0 & 0 & 1 & 1 & 0 & 1 & 0 & 1 & 0 & 1 & 1 & 0 & 0 & 0 & 1 & 0 & 0 & 1 & 1 & 0 & 1 & 0 & 1 & 1 & 1 & 1 & 0 & 1 & 1 & 0 \\
 1 & 1 & 1 & 1 & 1 & 1 & 0 & 1 & 0 & 1 & 1 & 1 & 1 & 0 & 1 & 0 & 1 & 1 & 0 & 1 & 1 & 1 & 0 & 1 & 0 & 0 & 1 & 0 & 1 & 1 & 0 & 1 & 1 & 1 & 1 & 0 & 1 & 0 & 0 & 0 & 1 & 0 & 0 & 1 & 0 & 0 & 0 \\
\end{array}
\right],\]
and cyclically shifting its rows $a$, $b$, $c$, and $d$, we get four circulant matrices $A=A^T$, $B=B^T$, $C$ and $D$ of order $47$ satisfying equation \eqref{sup}. In this case, we have
\begin{align*}
&\Sigma:=\mathcal{I}(a)+\mathcal{I}(b)\\
&=[6, -10, -10, 6, -6, 18, -10, -2, -6, 2, -10, 6, 2, -2, 2, -6, 2, -6, 2, -2, -10, 2, -2]
\end{align*}
with $\|\Sigma\|=1116$. Here the sum of the autocorrelation coefficients peaks at $18$.
\end{example}
The results of this paper may be extended in two ways: on the one hand, the array shown in Theorem~\ref{T4} might be further modified in order to accommodate only a single symmetric matrix (such as those found in \cite{DJsym}), or matrices meeting various other properties. On the other hand, it is plausible that access to supercomputing resources combined with more sophisticated search algorithms will eventually progress beyond the case $p=47$.

\appendix
\section*{Appendix (Online version only)}
Here we provide the four sequences from Example~\ref{ex1} in machine readable form.
\begin{verbatim}
{{1, 1, 1, 1,-1, 1,-1, 1, 1,-1,-1,-1, 1,-1, 1,
  1,-1,-1,-1,-1,-1, 1, 1, 1, 1, 1, 1,-1,-1,-1,-1,
 -1, 1, 1,-1, 1,-1,-1,-1, 1, 1,-1, 1,-1, 1, 1, 1},
 {1, 1,-1, 1,-1, 1, 1, 1,-1, 1,-1, 1, 1,-1,-1,
 -1, 1, 1,-1,-1, 1,-1, 1, 1, 1, 1,-1, 1,-1,-1, 1,
  1,-1,-1,-1, 1, 1,-1, 1,-1, 1, 1, 1,-1, 1,-1, 1},
 {1, 1, 1, 1, 1, 1, 1, 1, 1,-1,-1,-1, 1, 1,-1,
  1,-1, 1, 1,-1,-1, 1,-1,-1,-1,-1,-1, 1,-1,-1, 1,
  1,-1, 1, 1, 1, 1,-1, 1, 1, 1,-1,-1, 1,-1, 1,-1},
 {1, 1, 1, 1, 1, 1,-1,-1, 1, 1, 1, 1,-1, 1, 1,
 -1, 1,-1, 1,-1,-1, 1, 1,-1, 1, 1,-1,-1,-1, 1,-1,
  1, 1,-1, 1, 1, 1,-1,-1,-1, 1, 1,-1, 1, 1,-1,-1}}
\end{verbatim}

Here we provide the four sequences from Example~\ref{ex2} in machine readable form.
\begin{verbatim}
{{1,-1,-1, 1, 1,-1,-1,-1,-1, 1, 1, 1,-1,-1,-1,
  1,-1,-1,-1, 1, 1, 1, 1, 1, 1, 1, 1, 1, 1,-1,-1,
 -1, 1,-1,-1,-1, 1, 1, 1,-1,-1,-1,-1, 1, 1,-1,-1},
 {1,-1,-1,-1, 1, 1,-1,-1, 1,-1, 1,-1,-1,-1, 1,
 -1, 1, 1,-1,-1, 1,-1, 1, 1, 1, 1,-1, 1,-1,-1, 1,
  1,-1, 1,-1,-1,-1, 1,-1, 1,-1,-1, 1, 1,-1,-1,-1},
 {1, 1, 1, 1, 1, 1, 1,-1,-1,-1,-1, 1,-1, 1, 1,
  1, 1,-1,-1, 1, 1,-1, 1,-1, 1,-1, 1, 1,-1,-1,-1,
  1,-1,-1, 1, 1,-1, 1,-1, 1, 1, 1, 1,-1, 1, 1,-1},
 {1, 1, 1, 1, 1, 1,-1, 1,-1, 1, 1, 1, 1,-1, 1,
 -1, 1, 1,-1, 1, 1, 1,-1, 1,-1,-1, 1,-1, 1, 1,-1,
  1, 1, 1, 1,-1, 1,-1,-1,-1, 1,-1,-1, 1,-1,-1,-1}}
\end{verbatim}
\end{document}